\newcommand{\D}{\mathrm{d}}
\numberwithin{equation}{section}
\newtheorem{theorem}{Theorem}[section]
\newtheorem{lemma}[theorem]{Lemma}
\newtheorem{proposition}[theorem]{Proposition}
\theoremstyle{definition}
\newtheorem{definition}[theorem]{Definition}
\newtheorem{remark}[theorem]{Remark}
\newcommand{\FR}{\mathbb{R}}
\newcommand{\Sb}{\mathbb{S}}
\title[Microlocal inversion]{Microlocal inversion of a restricted mixed ray transform for second-order tensor fields in $\mathbb{R}^3$}
\author[C. Thakkar]{Chandni Thakkar}
\address{Department of Mathematics, IIT Gandhinagar, Gujarat, India}
\email{thakkar\_chandni@iitgn.ac.in}
\begin{document}
\begin{abstract}
In this article, we study a restricted mixed ray transform acting on second-order tensor fields in 3-dimensional Euclidean space and prove the invertibility of this integral transform using microlocal techniques. Here, the mixed ray transform is restricted over lines passing through a fixed curve $\gamma$ in $\mathbb{R}^3$ satisfying certain geometric conditions. The main theorem of the article shows that a second-order tensor field can be recovered from its restricted mixed-ray transform up to the kernel of the transform, a smoothing term, and a known singular term.
\end{abstract}
	\subjclass[2020]{35A22,35S30,46F12}
	\keywords{Mixed ray transform, microlocal analysis}
\maketitle
\section{Introduction}

The mixed ray transform (MiRT) was introduced by Sharafutdinov in \cite{Sharafutdinov_1994}*{Chapter 7} to study the geometrical aspects of quasi-isotropic elastic media. It was shown that compared to the classical isotropic media, the formulas for zero approximation, in this case, had some additional features that require the study of the MiRT. Also, a detailed description of the appearance of MiRT in anisotropic perturbation of the Dirichlet-to-Neumann map of an isotropic elastic wave equation on a smooth and bounded domain in three-dimensional Euclidean space was shown in \cite{Uniqueness_stability_MRT}, along with some stability estimates for the normal operator in the case of $1 + 1$ and $2 + 2$ tensor fields. In \cite{kernel_MRT}, an explicit kernel description for the MiRT on 2-dimensional Riemannian manifolds was given. Recently in \cite{UCP_MiRT}, an inversion formula for the MiRT for $(k + \ell)$-ordered tensor field in $\mathbb{R}^2$ was given along with the range characterization for the transform and some unique continuation results. In this paper, we study microlocal inversion of a restricted MiRT acting on 2-tensor fields in $\mathbb{R}^3$.

Since the work on generalized Radon transforms in the framework of Fourier integral operators by Guillemin \cite{Guillemin_generalised_radon} and Guillemin-Sternberg \cite{Guillemin_Sternberg_Microlocal}, there have been many important results proved using microlocal analysis techniques; for this, we refer to \cites{Quinto_Microlocal_genralized_Radon, Venky_Quinto_Microlocal_analysis, Uhlmann_Greenleaf_Nonlocal_inversion, Alain_Microlocal_analysis, Microlocal_2018,  Microlocal_2021, Katsevich_microlocal_analysis, Uhlmann_microlocal_scalar_2003, Microlocal_doppler_transform, Sharafutdinov_Uhlmann_2005, Uhlamnn_Stefanov_stability_estimates, Uhlmann_Stefanov_2005, Uhlmann_Stefanov_2008, Uhlmann_Vasy_2016} and the references therein. Certain support theorems are also proved using microlocal analysis; for instance, see \cites{Venky_support_function_2009, Boman_Quinto_support_theorem_Radon, Boman_Quinto_support_theorem_3DRadon, Venky_Stefanov_support_theorem, Anuj_Rohit_support_theorems}. In \cite{Uhlmann_Greenleaf_Nonlocal_inversion}, Greenleaf and Uhlmann studied a restricted data problem for ray transforms acting on functions in the Riemannian geometry setting. Further, in \cite{Microlocal_doppler_transform}, microlocal inversion for the Doppler transform in 3-dimensional Euclidean space was studied. Recently in \cite{Microlocal_2018} and \cite{Microlocal_2021}, the microlocal inversion of a restricted longitudinal ray transform in $\mathbb{R}^n$ and restricted transverse ray transform in $\mathbb{R}^3$ for symmetric tensor fields was derived. This paper aims to prove a similar result for the mixed ray transform acting on 2-tensor fields in $\mathbb{R}^3$. We define the transform for tensor fields of arbitrary order $k + \ell$, where $k, \ell = 1, 2, 3, \dots$. The inversion problem for arbitrary order tensor fields is quite hard, as observed in earlier works as well; please see Remark \ref{remark: generalization} (below) for a comment about the complexity in the general case. Our focus in this work is to solve this problem in the case of 2-tensor fields.

Let $C^\infty(T^2)$ denote the space of smooth 2-tensor fields in $\mathbb{R}^3$ and $C_c^\infty(T^2) \subset C^\infty(T^2)$ be the space of smooth compactly supported 2-tensor fields in $\mathbb{R}^3$. Any element $f \in C_c^\infty(T^2)$ can be written as 
\[f (x) = f_{ij} (x) \D x^i \D x^j,\]
where $f_{ij}$ are smooth and compactly supported functions in $\mathbb{R}^3$. Throughout this article, Einstein summation convention will be followed for repeating indices. Let $\mathbb{S}^{2}$ denote the unit sphere in $\mathbb{R}^3$, then the space of straight lines in $\mathbb{R}^3$ can be parameterized by:

\[T\mathbb{S}^{2} = \left\{(x,\xi) \in \mathbb{R}^3 \times \mathbb{S}^{2}: \left<x, \xi\right> = 0\right\}.\]

\noindent A pair $(x, \xi)$ in $T\mathbb{S}^{2}$ represents a unique line in $\mathbb{R}^3$ passing through $x$ and in the direction of $\xi$. 
\begin{definition}
    The mixed ray transform $\mathcal{M}$ acting on $(k + \ell)$-ordered tensor field, which is symmetric with respect to first $k$ and last $\ell$ indices, is defined as follows:
    \begin{equation}
        \mathcal{M} f (x, \xi) = \int_{-\infty}^\infty f_{i_1 \dots i_k j_1 \dots j_\ell} (x + t\xi) \xi^{i_1} \dots \xi^{i_k} \eta^{j_1} \dots \eta^{j_\ell} \,dt,  
    \end{equation}
    where $\eta$ is any vector orthogonal to $\xi$.
\end{definition}

\noindent In this article, we are concerned with 2-tensor fields, and hence we define an equivalent version of the above definition. For any vector $\xi \in \mathbb{S}^{2}$, there exists $0 \leq \alpha < \pi$ and $0 \leq \beta < 2 \pi$ such that $\xi$ can be represented as follows:
\[\xi = (\sin \alpha \cos \beta, \sin \alpha \sin \beta, \cos \alpha).\]
Also, the set $\left\{\xi, \xi_\alpha, \xi_\beta\right\}$ with $\xi_\alpha = (\cos \alpha \cos \beta, \cos \alpha \sin \beta, - \sin \alpha)$ and $\xi_\beta = ( - \sin \beta, \cos \beta, 0)$
forms an orthonormal basis of $\mathbb{R}^3$. Now we give another version of the definition of the MiRT using above notations.

\begin{definition}
    The mixed ray transform (MiRT) $\mathcal{M} = (\mathcal{M}_\alpha, \mathcal{M}_\beta) :C_c^{\infty}(T^2) \rightarrow {(C^{\infty} (T\Sb^2))}^2$ is a bounded linear operator which is defined as follows:
    \begin{equation}\label{eq:mixed ray transform}
    \begin{aligned}
        \mathcal{M}_\alpha f (x, \xi) &= \int_{-\infty}^\infty \left<f (x + t \xi), \xi \otimes \xi_\alpha\right> \,dt &= \int_{-\infty}^\infty f_{ij} (x + t \xi)\xi^{i}\xi_\alpha^{j}\,dt,\\ 
        \mathcal{M}_\beta f (x, \xi) &= \int_{-\infty}^\infty \left<f (x + t \xi), \xi \otimes \xi_\beta\right> \,dt  &= \int_{-\infty}^\infty f_{ij} (x + t \xi)\xi^{i}\xi_\beta^{j}\,dt
    \end{aligned}
    \end{equation}
    where $\otimes$ denotes the usual tensor product.
\end{definition}

The space of straight lines in $\mathbb{R}^3$ is 4-dimensional. Hence, inversion of the transform defined on a collection of straight lines to recover a 3-dimensional tensor field is an overdetermined problem. So, the question arises of recovering the tensor field from a 3-dimensional restricted transform data. In this paper, we consider the set of straight lines passing through a fixed curve, satisfying the Kirillov-Tuy condition (see Definition \ref{def: Kirillov-Tuy condition} below), in $\mathbb{R}^3$. The aim here is to recover the wavefront set (that is, singularities) of the unknown tensor field from the knowledge of its restricted MiRT data. The primary motivation to study this problem comes from similar works \cite{Microlocal_doppler_transform, Microlocal_2018, Microlocal_2021, Microlocal_thesis} related to different integral transforms. We closely follow these manuscripts to prove the main theorem of this article. 

The article is organized as follows. In Section \ref{sec: definitions}, we fix some notations and definitions and end it with the statement of the main theorem. Section \ref{section: Principal symbol} is dedicated to the analysis of the principal symbol of the normal operator of the MiRT. Finally, Section \ref{sec: microlocal_inversion} is devoted to the proof of our main theorem.

\section{Preliminaries and statement of the main result}\label{sec: definitions}
This section is dedicated to setting up the notations and defining some important operators for our study. We require the definitions of the operators only for lower-ordered tensor fields, and hence, we have defined accordingly. Their definitions for higher-ordered tensor fields and more details can be found in \cite{Sharafutdinov_1994}. Toward the end of this section, we state the main result of this article.

Let $u = (u_1, u_2, u_3)$ be a vector field in $\mathbb{R}^3$. Then the operator of \textit{inner differentiation} $\D$ (also known as symmetrized derivative) acting on $u$ is a symmetric $2$-tensor field given by:
\begin{equation} \label{def: inner differntiation}
\left(\D u\right)_{ij} = \frac{1}{2} \left[\frac{\partial u_i}{\partial x_j} + \frac{\partial u_j}{\partial x_i}\right].
\end{equation}

\noindent The next two operators are important to understand the kernel of the MiRT: Let $u$ and $w$ be vector and scalar fields, respectively, in $\mathbb{R}^3$. Then the operators $\D'$ and $\lambda$ are defined as follows:

\begin{equation}
    {(\D^\prime u)}_{ij} = \frac{\partial u_j}{\partial x_i} \quad \text{ and } \quad {(\lambda w)}_{ij} = \delta_{ij} w, \quad \text{ where } \quad \delta_{ij} = \begin{cases*}
        1; & $i = j$\\
        0; & $i \neq j.$
        \end{cases*}
\end{equation}

\noindent If $u$ and $w$ are second-order tensor fields, then the dual to the above operators, $\delta'$ and $\mu$, are defined as follows:

\begin{equation}
    {(\delta' u)}_j = \frac{\partial u_{ij}}{\partial x^i} \quad \text{ and } \quad \mu w = \delta^{ij} w_{ij}.
\end{equation}
The following decomposition for a $(k + \ell)$-ordered tensor fields, symmetric with respect to first $k$ and last $\ell$ indices, was proved in \cite[Proposition 3.2]{UCP_MiRT}.

\begin{proposition} \label{thm: tensor field decomposition}
    For any tensor field $f \in C_c^\infty (\mathit{S}^k \times \mathit{S}^\ell)$, there exists $f^s \in C^\infty (\mathit{S}^k \times \mathit{S}^\ell),\  u \in C^\infty (\mathit{S}^{k-1} \times \mathit{S}^{\ell})$ and $w \in C^\infty (\mathit{S}^{k - 1} \times \mathit{S}^{\ell - 1})$ such that:
\begin{equation}\label{tensor field decomposition}
   f = f^s + \D^\prime u + \lambda w,
\end{equation}
where, $\delta'f^s = \mu f^s = 0, \ \mu u = 0$, and $f^s,u \rightarrow 0$ as $|x| \rightarrow \infty$.
\end{proposition}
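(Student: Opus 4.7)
The plan is to reduce the decomposition to an elliptic system for $u$ and $w$. I start from the ansatz $f = f^s + \D' u + \lambda w$ and apply the dual operators $\delta'$ and $\mu$. Imposing $\delta' f^s = \mu f^s = 0$, and using the identities valid in the $k=\ell=1$ case, $\delta' \D' u = \Delta u$, $\delta' \lambda w = \D' w$, $\mu \D' u = \delta' u$, and $\mu \lambda w = 3 w$, this yields the coupled system
\begin{equation}
\Delta u + \D' w = \delta' f, \qquad \delta' u + 3 w = \mu f,
\end{equation}
whose right-hand sides are compactly supported since $f$ is.

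Next I decouple: applying $\delta'$ to the first equation and using the second to eliminate $\delta' u$ produces the Poisson equation $\Delta w = \tfrac{1}{2}\bigl(\Delta \mu f - \delta' \delta' f\bigr)$, with compactly supported right-hand side. I solve it by convolution with the Newtonian potential $\Phi(x) = -(4\pi|x|)^{-1}$ of $\mathbb{R}^3$, obtaining a smooth $w$. The field $u$ is then recovered componentwise from $\Delta u = \delta' f - \D' w$ by the same Newtonian-potential inversion.

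Setting $f^s := f - \D' u - \lambda w$, the conditions $\delta' f^s = 0$ and $\mu f^s = 0$ hold by construction. The decay $u(x), f^s(x) \to 0$ as $|x| \to \infty$ follows from the $|x|^{-1}$ decay of Newtonian potentials of compactly supported data in $\mathbb{R}^3$; outside the support of $f$ the identity $f^s = -\D' u - \lambda w$ makes the claimed decay explicit.

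For general $(k,\ell)$, the same scheme carries over: taking $\delta'$ and $\mu$ produces a coupled system whose principal symbol is still block-elliptic, and I decouple it by a finite linear-algebra calculation to obtain a Poisson equation for $w$, then one for $u$. The extra constraint $\mu u = 0$ is preserved by absorbing any residual trace picked up in the elliptic inversion into a correction of $w$. I expect the main obstacle in the general case to be purely combinatorial, tracking symmetric-index identities for $(k+\ell)$-tensors; the analytic content remains the Newtonian-potential construction.
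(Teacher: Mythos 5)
The first thing to note is that the paper contains no proof of this proposition: it is quoted from \cite{UCP_MiRT}*{Proposition 3.2}, where (as in Sharafutdinov's treatment of the analogous solenoidal decomposition) the argument runs on the Fourier side, decomposing $\widehat{f}(\xi)$ algebraically for each $\xi \neq 0$ and controlling the division by powers of $|\xi|$. Your physical-space, potential-theoretic route is therefore a genuinely different strategy, and for $k = \ell = 1$ its core is sound: the identities $\delta' \D' u = \Delta u$, $\delta' \lambda w = \D' w$, $\mu \D' u = \delta' u$, $\mu \lambda w = 3w$ are correct for the operators as defined in the paper, and eliminating $\delta' u$ does yield $\Delta w = \tfrac{1}{2}(\Delta \mu f - \delta' \delta' f)$ with compactly supported right-hand side.

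There are, however, gaps you should repair. First, the claim that $\mu f^s = 0$ holds ``by construction'' is false: your construction enforces only the first equation $\Delta u + \D' w = \delta' f$, which gives $\delta' f^s = 0$; the trace condition $\mu f^s = 0$ is exactly the second equation $\delta' u + 3w = \mu f$, which you derived as a \emph{necessary} condition but never verified for the $(u,w)$ you actually built. It does hold, but needs an argument: setting $g := \delta' u + 3w - \mu f$, one computes $\Delta g = \delta'\delta' f + 2\Delta w - \Delta \mu f = 0$ using your two Poisson equations, and since $g \to 0$ at infinity, Liouville's theorem gives $g \equiv 0$. Second, the right-hand side of the $u$-equation, $\delta' f - \D' w$, is \emph{not} compactly supported ($w$ is a Newtonian potential), so ``the same Newtonian-potential inversion'' needs the decay $\D' w = O(|x|^{-4})$ — integrable, so the convolution converges and $u = O(|x|^{-1})$ — but this must be said, since it is also what feeds the Liouville step. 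Third, the final paragraph on general $(k,\ell)$ is a sketch, and its first step already fails for $k \geq 2$: with $\D'$ the derivative symmetrized over the first index group, one has identities of the form $\delta' \D' u = \tfrac{1}{k}\bigl(\Delta u + (k-1)\,\D' \delta' u\bigr)$ rather than $\delta' \D' u = \Delta u$, so the two-step elimination does not carry over verbatim, and the proposal to secure $\mu u = 0$ by ``absorbing a residual trace into $w$'' is unsubstantiated (for $k = \ell = 1$ that constraint is vacuous, which is why you never meet it). Since the paper only uses the 2-tensor case, your argument covers what is actually needed once the first two points are patched, but as written it does not prove the proposition in the stated generality.
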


\noindent It is also known that the tensor fields of the form $\D' u$ and $\lambda w$ lie in the kernel of the MiRT, that is, $\mathcal{M}(\D' u) = 0 = \mathcal{M}(\lambda w)$. Hence, using the MiRT data, we can only hope to recover $f^{s}$. We call $f^{s}$ to be \textit{solenoidal} because $\delta' f^{s} = 0$. As mentioned earlier, we consider the transform data on a collection of lines passing through a fixed curve $\gamma$ in $\mathbb{R}^3$. We now state the required conditions on this curve $\gamma$, starting with the well-known \textit{Kirillov-Tuy condition}. This condition was initially introduced for the scalar case in \cite{Kirillov_1961,Tuy_1983} and later for tensor fields in \cite{Denisjuk_2006,Vertgeim_2000}.

\begin{definition}[Kirillov-Tuy condition] \label{def: Kirillov-Tuy condition}
    Let $B$ be a ball in $\mathbb{R}^3$. Say that the curve $\gamma$ satisfies the Kirillov-Tuy condition of order $2$ if for almost any hyperplane $H$ intersecting the domain $B$, there are points $\gamma_1,\gamma_2,\gamma_3 \in H \cap \gamma$, such that for almost every $x \in H \cap B$, the vectors $x - \gamma_1,x - \gamma_2,x - \gamma_3$ are pairwise independent.
\end{definition}

\begin{remark}
     The following conditions are imposed on the curve $\gamma$ under consideration:
     \begin{enumerate}
         \item It is smooth, regular and without self-intersections.
         \item It satisfies the Kirillov-Tuy condition of order 2.
         \item There is a uniform bound on the number of intersection points of the curve with almost every hyperplane $H$ in $\mathbb{R}^3$.
     \end{enumerate}
\end{remark}

\noindent For the rest of the article, we assume that the transform $\mathcal{M}$ is known for lines passing through curve $\gamma$, satisfying the above conditions. Following \cite{Uhlmann_Greenleaf_Nonlocal_inversion, Microlocal_doppler_transform, Microlocal_2018, Microlocal_2021}, we put certain restrictions on the singular directions that can be potentially recovered by the microlocal approach. For this, we define the following sets:

\begin{align}
\begin{split}
    \Xi &= \left\{(x, \xi) \in T^* B\symbol{92}\left\{0\right\}: \mbox{ there exists at least } 3 \mbox{ points } \gamma(t_i) \in \gamma \cap H(x, \xi); i = 1, 2, 3,\right.\\
    &\hspace{4mm}\left.\mbox{ such that the vectors } \left\{x - \gamma(t_i)\right\}_{i = 1}^3 \mbox{ are pairwise independent} \right\}.\\
    \Xi' &= \left\{(x, \xi) \in \Xi : H(x, \xi) \mbox{ intersects the curve } \gamma \mbox{ transversely} \right\}.\\
    \Xi'' &= \left\{(x, \xi) \in \Xi : H(x, \xi) \mbox{ intersects } \gamma \mbox{ tangentially at points } \left\{\gamma(t_i)\right\} \mbox{ such that } \left<\gamma'' (t_i), \xi\right> \neq 0 \right\}.
\end{split}
\end{align}

In the main theorem, we show that the potentially recoverable singularities lie in the union of the sets $\Xi'$ and $\Xi''$. Before stating the main theorem, we state some results important for microlocal analysis of operator $\mathcal{M}$ and its normal operator $\mathcal{N} = \mathcal{M}^* \mathcal{M}$, where $\mathcal{M}^*$ denotes the $\textit{L}^2-$ adjoint of $\mathcal{M}$. For details about the normal operator of the MiRT, we refer to \cite{UCP_MiRT}. The proofs of the results stated below can be found directly in \cite{Microlocal_thesis, Microlocal_2018}, or can be done by suitable modifications to them.

Let $\mathcal{C}$ denote the set of all lines passing through the curve $\gamma$. Any line $\ell$ in $\mathcal{C}$ can be identified using $t$ (in the domain of $\gamma$) and $\omega \in \mathbb{S}^2$, such that, $\ell = \left\{\gamma(t) + s \omega: s \in \mathbb{R}\right\}$. Further, let

\begin{equation*}
    Z = \left\{(\ell, x) : x \in \ell\right\} \subset \mathcal{C} \times \mathbb{R}^3.
\end{equation*}
be the point-line relation. Then, we get $(t, \omega, s)$ to be a local parametrization of $Z$. As shown in \cite{Microlocal_thesis, Microlocal_2018, Microlocal_2021}, the conormal bundle of $Z$ is given by 

\begin{equation*}
    N^* Z = \left\{(\ell, x; \Gamma, \xi) : (\ell, x) \in Z \text{ and } (\Gamma, \xi)|_{T_{(\ell, x)}Z}\right\}.
\end{equation*}

\noindent $N^* Z$ can be parametrized by $\left\{(t, \omega, s, \Gamma, \xi)\right\}$ with

\begin{equation} \label{eq: xi}
    \xi = z_1 \omega_\alpha + z_2 \omega_\beta \text{ for some } z_1 \text{ and } z_2 \in \mathbb{R}
\end{equation}
and
\begin{equation} \label{eq: gamma}
    \Gamma = \begin{pmatrix}
\Gamma_1\\
\Gamma_2\\
\Gamma_3
\end{pmatrix} = \begin{pmatrix}
-\xi \cdot \gamma'(t)\\
-s z_1\\
-s z_2 \sin \alpha
\end{pmatrix}.
\end{equation}

\begin{lemma}
    The map 
    \begin{equation*}
        \Phi : (t, \alpha, \beta, s, z_1, z_2) \rightarrow (t, \alpha, \beta, \Gamma; x, \xi)
    \end{equation*}
    with $\Gamma$ as in \eqref{eq: gamma}, $\xi$ as in \eqref{eq: xi} and $x = \gamma(t) + s \omega$ gives a local parametrization of $N^* Z$ at the points where $\alpha \neq 0, \pi$.
\end{lemma}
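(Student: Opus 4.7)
The plan is to identify $N^*Z$ as the annihilator of $TZ$ inside $T^*(\mathcal{C} \times \mathbb{R}^3)$, compute this annihilator explicitly in the coordinates $(t, \alpha, \beta, s)$ on $Z$ (which are valid precisely when $\alpha \neq 0, \pi$), and then verify that $\Phi$ is a diffeomorphism onto its image by a dimension count together with a direct rank check. The first step is to differentiate the parametrization $(t, \alpha, \beta, s) \mapsto (\ell(t, \alpha, \beta),\, \gamma(t) + s\omega(\alpha,\beta))$ of $Z$, yielding four tangent vectors whose $\mathbb{R}^3$-components are $\gamma'(t)$, $s\,\omega_\alpha$, $s\sin\alpha\, \omega_\beta$, and $\omega$, paired with the obvious base directions on $\mathcal{C}$.

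Next I would impose on a covector $(\Gamma,\xi)\in T^*(\mathcal{C}\times\mathbb{R}^3)$ the annihilation condition against each of these four tangent vectors. Pairing against $\partial_s$ forces $\langle\xi,\omega\rangle = 0$; since $\{\omega, \omega_\alpha, \omega_\beta\}$ is an orthonormal basis of $\mathbb{R}^3$ exactly when $\alpha \neq 0, \pi$, this orthogonality uniquely expresses $\xi = z_1 \omega_\alpha + z_2 \omega_\beta$, giving \eqref{eq: xi}. Pairing against $\partial_t, \partial_\alpha, \partial_\beta$ then produces $\Gamma_1 = -\xi\cdot\gamma'(t)$, $\Gamma_2 = -s\,\xi\cdot\omega_\alpha = -sz_1$, and $\Gamma_3 = -s\sin\alpha\,\xi\cdot\omega_\beta = -sz_2\sin\alpha$, which is precisely \eqref{eq: gamma}. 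This simultaneously shows that $\mathrm{Im}(\Phi)\subset N^*Z$ and that every element of $N^*Z$ over the chosen chart has the asserted form, so $\Phi$ surjects onto the corresponding open piece of $N^*Z$.

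It remains to verify that $\Phi$ is a local diffeomorphism. A dimension count gives $\dim N^*Z = \dim(\mathcal{C}\times\mathbb{R}^3) = 6$, matching the six free parameters $(t,\alpha,\beta,s,z_1,z_2)$. The differential $d\Phi$ is block triangular in these coordinates: on the base variables $(t,\alpha,\beta,s)$ it is the identity, while on the fiber the map $(z_1,z_2)\mapsto(\xi,\Gamma_2,\Gamma_3)$ is linear with matrix whose injectivity reduces to the linear independence of $\{\omega_\alpha, \omega_\beta\}$. Hence $d\Phi$ has full rank 6 and $\Phi$ is a local diffeomorphism onto $N^*Z$.

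The only delicate point, and the reason the statement excludes $\alpha = 0, \pi$, is exactly that at those values the spherical coordinates degenerate: $\omega_\beta$ collapses, the pair $\{\omega_\alpha,\omega_\beta\}$ ceases to span $\omega^\perp$, and the formulas for $\Gamma_3$ and for $\xi$ in terms of $(z_1,z_2)$ become singular. Away from these poles, all of the computations above are smooth and invertible, which is what the lemma asserts; a fully global description would require patching in a second chart on $\mathbb{S}^2$, but that is not needed here.
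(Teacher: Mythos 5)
Your proof is correct and is essentially the same argument as the paper's (which is only cited, not reproduced, from \cite{Microlocal_thesis, Microlocal_2018}): differentiate the chart $(t,\alpha,\beta,s)$ of $Z$, pair a covector $(\Gamma,\xi)$ against the four tangent vectors $\gamma'(t)$, $s\,\omega_\alpha$, $s\sin\alpha\,\omega_\beta$, $\omega$ to force \eqref{eq: xi} and \eqref{eq: gamma}, and conclude via the dimension/rank count $\dim N^*Z = 6$. One small inaccuracy in your closing remark: at $\alpha = 0,\pi$ the unit vector $\omega_\beta = (-\sin\beta,\cos\beta,0)$ does not collapse and $\{\omega_\alpha,\omega_\beta\}$ still spans $\omega^\perp$; what degenerates is the coordinate tangent vector $\partial\omega/\partial\beta = \sin\alpha\,\omega_\beta$, i.e.\ the spherical chart itself (so $\Gamma_3 = -s z_2 \sin\alpha$ vanishes identically and $\Phi$ fails to be a parametrization there), which is exactly the justification you already gave correctly earlier when restricting to $\alpha \neq 0,\pi$.
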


\begin{proposition}
    Both the components of the operator $\mathcal{M}$ are Fourier integral operators of order $-1/2$ with the associated canonical relation $C$ given by $(N^* Z)'$ where $Z = \left\{(\ell, x) : x \in \ell\right\}$. The left and right projections $\pi_L$ and $\pi_R$ from $C$ drop rank simply by 1 on the set

    \begin{equation}
        \Sigma : = \left\{(t, \alpha, \beta, s, z_1, z_2) : \gamma'(t) \cdot \xi = 0\right\},
    \end{equation}
    where $\xi$ is given by \eqref{eq: xi}. The left projection $\pi_L$ has a blowdown singularity along $\Sigma$ and the right projection $\pi_R$ has a fold singularity along $\Sigma$.
\end{proposition}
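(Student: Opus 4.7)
The plan is to adapt the Fourier-integral-operator analysis developed for related restricted ray transforms in \cite{Microlocal_doppler_transform,Microlocal_2018,Microlocal_2021}. First I would display the Schwartz kernels of $\mathcal{M}_\alpha$ and $\mathcal{M}_\beta$ as conormal distributions on $\mathcal{C}\times B$ supported on the incidence relation $Z$. Writing a line $\ell\in\mathcal{C}$ through $\gamma(t)$ with direction $\omega$ and a point $x=\gamma(t)+s\omega$, each component acts by integration against $\omega^i\omega_\bullet^j\,\delta_Z$, $\bullet\in\{\alpha,\beta\}$. Representing $\delta_Z$ by an oscillatory integral in terms of two local defining functions of $Z$ exhibits each $\mathcal{M}_\bullet$ as a Fourier integral distribution; the canonical relation is then $C=(N^*Z)'$ by construction, and the order $-1/2$ follows from the standard dimension count for X-ray-type transforms.

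For the rank-drop analysis I would use the coordinates $(t,\alpha,\beta,s,z_1,z_2)$ supplied by $\Phi$. The left projection $\pi_L$ reads $(t,\alpha,\beta,\Gamma_1,\Gamma_2,\Gamma_3)$ with $\Gamma$ as in \eqref{eq: gamma}; a direct row-expansion of the $3\times 3$ Jacobian block of $(\Gamma_1,\Gamma_2,\Gamma_3)$ with respect to $(s,z_1,z_2)$ gives a determinant proportional to $s\sin\alpha\,(\xi\cdot\gamma'(t))$, so the rank drops by exactly one precisely on $\Sigma=\{\xi\cdot\gamma'=0\}$ (away from the lemma's excluded set $\alpha\in\{0,\pi\}$). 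Solving the linear system, the kernel is spanned by $V=s\partial_s-z_1\partial_{z_1}-z_2\partial_{z_2}$, and a quick computation gives $V(\xi\cdot\gamma')=-(\xi\cdot\gamma')$, which vanishes on $\Sigma$; hence $\ker d\pi_L\subset T\Sigma$ and $\pi_L$ is a blowdown along $\Sigma$.

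For $\pi_R:(t,\alpha,\beta,s,z_1,z_2)\mapsto(\gamma(t)+s\omega,\,z_1\omega_\alpha+z_2\omega_\beta)$, I would decompose the equation $\delta x=0$ in the orthonormal frame $\{\omega,\omega_\alpha,\omega_\beta\}$ to express $\delta s,\delta\alpha,\delta\beta$ as multiples of $\delta t$ through the components of $\gamma'$ in that frame. Substituting into $\delta\xi=0$ using the identities $\partial_\alpha\omega_\alpha=-\omega$, $\partial_\beta\omega_\alpha=\cos\alpha\,\omega_\beta$, $\partial_\beta\omega_\beta=-\sin\alpha\,\omega-\cos\alpha\,\omega_\alpha$, the $\omega$-component collapses to $(\xi\cdot\gamma')\delta t/s=0$, vanishing identically on $\Sigma$ and confirming the simple rank drop, while the $\omega_\alpha$- and $\omega_\beta$-components determine $\delta z_1,\delta z_2$ in terms of $\delta t$. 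Plugging the resulting null vector into $d(\xi\cdot\gamma')$, the many cross terms produced by differentiating along the frame cancel in pairs, and what survives on $\Sigma$ is the single term $\delta t\cdot(\xi\cdot\gamma''(t))$. Since this is nonzero on the slice of $\Sigma$ appearing in the definition of $\Xi''$ (and generically on $\Xi'$), the kernel is transverse to $T\Sigma$, so $\pi_R$ is a Whitney fold.

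The main obstacle I anticipate is the bookkeeping in the last step: the frame $\{\omega,\omega_\alpha,\omega_\beta\}$ varies with $(\alpha,\beta)$, so derivatives of the frame vectors couple position and direction data and a naive computation produces many cross terms involving $\omega\cdot\gamma'$, $\omega_\alpha\cdot\gamma'$, $\omega_\beta\cdot\gamma'$ multiplied by factors of $\cos\alpha/\sin\alpha$. I would manage this by expanding every vector in the orthonormal frame throughout and exploiting orthonormality to separate coefficients; the clean survival of only $\xi\cdot\gamma''(t)$ then reflects the chain rule for $\xi\cdot\gamma'$ restricted to $\Sigma$, and cleanly ties the fold criterion to the geometric hypothesis used to define $\Xi''$ in the main theorem.
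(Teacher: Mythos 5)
Your proposal is correct and takes essentially the same approach as the paper, which itself gives no proof but defers this proposition to the computations in \cite{Microlocal_thesis, Microlocal_2018}: representing the kernels as conormal distributions on $Z$ (yielding order $-1/2$ and $C=(N^*Z)'$), computing the Jacobian block $\partial(\Gamma_1,\Gamma_2,\Gamma_3)/\partial(s,z_1,z_2)$ whose determinant is indeed $s\sin\alpha\,(\xi\cdot\gamma'(t))$, and verifying the blowdown/fold criteria via the kernel fields --- your $V=s\partial_s-z_1\partial_{z_1}-z_2\partial_{z_2}$ with $V(\xi\cdot\gamma')=-\xi\cdot\gamma'$, and the surviving term $V_R(\xi\cdot\gamma')|_{\Sigma}=\xi\cdot\gamma''(t)$ after the frame-derivative cross terms collapse to $(\omega\cdot\gamma')(\xi\cdot\gamma')/s$, all check out. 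The only slip is the parenthetical remark about $\Xi'$: over $\Xi'$ the plane $H(x,\xi)$ meets $\gamma$ transversally, so $\pi_R(\Sigma)$ does not meet $\Xi'$ at all, and the nonvanishing of $\xi\cdot\gamma''(t)$ on the relevant part of $\Sigma$ is precisely the hypothesis encoded in the definition of $\Xi''$.
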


\begin{lemma}
    The wavefront set of the Schwartz kernel of the normal operator $\mathcal{N}$ satisfies the following:

    \begin{equation*}
        WF(\mathcal{N}) \subset \Delta \cup \Lambda
    \end{equation*}
    where 
    \begin{align}
        \Delta = \left\{(x, \xi; x, \xi) : x = \gamma(t) + s \omega, \xi \in \omega^\perp \symbol{92} \left\{0\right\}\right\}
    \end{align}
    and
    \begin{align}
        \Lambda = \left\{\left(x, \xi; y, \frac{\tau}{\tilde{\tau}} \xi\right) : x = \gamma(t) + \tau \omega, y = \gamma(t) + \tilde{\tau} \omega, \xi \in \omega^\perp \symbol{92} \left\{0\right\}, \gamma'(t)\cdot \xi = 0, \tau \neq 0 \neq \tilde{\tau} \right\}.
    \end{align}
\end{lemma}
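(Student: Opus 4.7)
The plan is to read off $WF(\mathcal{N})$ from the composition $C^{t}\circ C$ of canonical relations, where $C=(N^*Z)'$ is the canonical relation of the Fourier integral operator $\mathcal{M}$ supplied by the previous proposition. By the standard calculus of FIOs (adapted to the non-transverse setting as in \cite{Microlocal_doppler_transform, Microlocal_2018, Microlocal_thesis}), one has $WF(\mathcal{N})\subset C^{t}\circ C$, so the task reduces to showing $C^{t}\circ C\subset \Delta\cup\Lambda$.

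First, I would unwind the composition via the local parametrization $\Phi$. A covector $(x,\xi;y,\eta)$ lies in $C^{t}\circ C$ exactly when there exist parameters $(t,\alpha,\beta,s,z_1,z_2)$ and $(t,\alpha,\beta,\tilde s,\tilde z_1,\tilde z_2)$ that $\Phi$ sends to a common $(\ell,\Gamma)$ and to $(x,\xi)$ and $(y,\eta)$ respectively; the first three entries must coincide since they determine the line $\ell\in\mathcal{C}$. Consequently $x=\gamma(t)+s\omega$, $y=\gamma(t)+\tilde s\omega$ lie on the same line and $\xi,\eta\in\omega^\perp\setminus\{0\}$. Equating the second and third components of $\Gamma$ in \eqref{eq: gamma} and using $\sin\alpha\neq 0$ gives $sz_i=\tilde s\tilde z_i$ for $i=1,2$, which by \eqref{eq: xi} is the vector relation $s\xi=\tilde s\eta$. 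Setting $\tau=s$, $\tilde\tau=\tilde s$, this yields $\eta=(\tau/\tilde\tau)\xi$, and substituting into $\Gamma_1=-\xi\cdot\gamma'(t)=-\eta\cdot\gamma'(t)$ forces
\[
\xi\cdot\gamma'(t) = \frac{\tau}{\tilde\tau}\,\xi\cdot\gamma'(t).
\]
Either $\tau=\tilde\tau$, so that $x=y$ and $\xi=\eta$ (landing in $\Delta$), or $\gamma'(t)\cdot\xi=0$ together with $\eta=(\tau/\tilde\tau)\xi$ (landing in $\Lambda$). The exceptional directions where $\sin\alpha=0$ form a lower-dimensional locus covered by an alternative chart and handled identically.

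The main technical obstacle, and the step I expect to require the most care, is justifying the inclusion $WF(\mathcal{N})\subset C^{t}\circ C$ precisely along the locus $\Sigma=\{\gamma'(t)\cdot\xi=0\}$, where the composition fails to be transverse: by the previous proposition, $\pi_R$ has a fold singularity and $\pi_L$ a blowdown along $\Sigma$. I would address this either by invoking the clean/fold-type composition calculus used for related restricted ray transforms in \cite{Microlocal_doppler_transform, Microlocal_2018, Microlocal_thesis}, or more concretely by writing the Schwartz kernel of $\mathcal{N}$ as an oscillatory integral in the variables $(t,\omega,s)$ and extracting the stationary-phase conditions directly. Either route produces the same dichotomy, and reassuringly $\Sigma$ is exactly the locus responsible for the off-diagonal piece $\Lambda$, so the fold geometry and the wavefront inclusion are consistent with one another.
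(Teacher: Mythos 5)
Your proposal is correct and is essentially the argument the paper itself defers to (it cites \cite{Microlocal_thesis, Microlocal_2018} rather than proving the lemma): bound $WF(\mathcal{N})$ by the set-theoretic composition $C^{t}\circ C$ and compute it in the parametrization \eqref{eq: xi}--\eqref{eq: gamma}, where equating the $\Gamma$-components yields $s\xi=\tilde{s}\eta$ and then $\xi\cdot\gamma'(t)=(\tau/\tilde{\tau})\,\xi\cdot\gamma'(t)$, giving exactly the dichotomy $\tau=\tilde{\tau}$ (hence $\Delta$) or $\gamma'(t)\cdot\xi=0$ (hence $\Lambda$). The only detail to make explicit is that dividing by $\tilde{\tau}$ uses $\tilde{\tau}\neq 0$, i.e., that the base points lie off the curve $\gamma$ — which is precisely where the condition $\tau\neq 0\neq\tilde{\tau}$ in the definition of $\Lambda$ enters and is harmless for fields supported in $B$.
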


\noindent The condition imposed on the curve in the definition of $\Xi''$ entails the clean intersection of the sets $\Delta$ and $\Lambda$. We have 

\begin{align}
    \Delta \cap \Lambda = \left\{(x, \xi; x, \xi) : x = \gamma(t) + s \omega, \xi \in \omega^\perp \symbol{92} \left\{0\right\}, \gamma'(t)\cdot \xi = 0\right\}.
\end{align}

\noindent Hence, $\Delta \cap \Lambda$ is a smooth manifold of codimension $k = 1$ in both $\Delta$ and $\Lambda$.

\begin{lemma}[\cite{Microlocal_thesis}]
    The Lagrangian $\Lambda$ arises as a flowout from the set $\pi_R(\Sigma)$.
\end{lemma}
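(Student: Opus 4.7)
The plan is to realize $\Lambda$ as a flowout in three steps: first, identify $\pi_R(\Sigma)$ as a coisotropic hypersurface of $T^*\mathbb{R}^3 \setminus 0$ cut out by a single defining function $\phi$; second, compute the Hamilton vector field $H_\phi$ and its bicharacteristic leaves; third, match the resulting two-point relation with the explicit description of $\Lambda$ in the previous lemma.

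To compute $\pi_R(\Sigma)$, I would use the given parametrization of $\Sigma$ via $(t, \omega, s, z_1, z_2)$ with the constraint $\gamma'(t)\cdot \xi = 0$ and the formula $\pi_R(t, \omega, s, z_1, z_2) = (\gamma(t) + s\omega,\ z_1 \omega_\alpha + z_2 \omega_\beta)$. A covector $(x, \xi)$ lies in $\pi_R(\Sigma)$ iff there exists $t$ such that $\xi\cdot(x - \gamma(t)) = 0$ and $\gamma'(t)\cdot \xi = 0$. Under the standing geometric hypotheses on $\gamma$, the first equation determines $t = t(x, \xi)$ smoothly on a neighborhood of a non-degenerate intersection of $\gamma$ with $H(x,\xi)$, and the second then reads $\phi(x, \xi) := \gamma'(t(x, \xi))\cdot \xi = 0$. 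Since $d\phi$ has a nonvanishing $\xi$-component (coming from the explicit $\gamma'(t)$ factor) away from $\gamma'(t) = 0$, $\pi_R(\Sigma)$ is a smooth hypersurface and hence automatically coisotropic.

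Next I would compute $H_\phi$. Differentiating the implicit relation $\xi\cdot(x - \gamma(t(x,\xi))) = 0$ gives $\nabla_x t$ parallel to $\xi$ and $\nabla_\xi t$ parallel to $x - \gamma(t)$, and using $\gamma'(t)\cdot \xi = 0$ on $\pi_R(\Sigma)$ itself kills the otherwise troublesome terms: modulo a multiple of the dilation $\xi\partial_\xi$, $H_\phi$ acts as translation of $x$ in the direction $\omega = (x - \gamma(t))/|x - \gamma(t)|$ together with a simultaneous rescaling of $\xi$. The integral curves of $H_\phi$ through $(\gamma(t) + \tilde\tau\omega, \xi_0)$ are therefore of the form $(\gamma(t) + \tau\omega,\ \lambda(\tau)\xi_0)$, and the homogeneity of $\phi$ of degree one in $\xi$ forces $\lambda(\tau)/\lambda(\tilde\tau) = \tau/\tilde\tau$, reproducing exactly the rescaling $\xi \mapsto (\tau/\tilde\tau)\xi$ in the definition of $\Lambda$. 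The flowout Lagrangian, namely the set of pairs in $\pi_R(\Sigma) \times \pi_R(\Sigma)$ lying on a common $H_\phi$-bicharacteristic, thus coincides set-theoretically with $\Lambda$, and has the correct dimension $6 = \dim \pi_R(\Sigma) + 1$.

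The main obstacle I anticipate is keeping the implicit dependence $t = t(x, \xi)$ under control when writing $H_\phi$ in ambient coordinates on $T^*\mathbb{R}^3$. The cleanest workaround, which I would adopt, is to transport the whole computation to the pushed-forward adapted coordinates coming from $(t, \omega, s, z_1, z_2)$ on $\Sigma$: in those coordinates the flow direction becomes $\partial_s$, the linear rescaling of $\xi$ with $\tau$ becomes manifest, and the identification of the flowout with $\Lambda$ reduces to an inspection of the definitions.
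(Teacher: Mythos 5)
Your overall route---exhibit $\pi_R(\Sigma)$ as a codimension-one coisotropic submanifold of $T^*\mathbb{R}^3\setminus 0$ with a homogeneous defining function $\phi$, flow out along $H_\phi$, and match the resulting two-point relation with the explicit description of $\Lambda$---is exactly the standard argument behind this lemma (the paper itself imports the proof from the cited thesis, where it is done this way, following Greenleaf--Uhlmann). However, there is a genuine gap in your first step. You define $t(x,\xi)$ by solving $\xi\cdot(x-\gamma(t))=0$ via the implicit function theorem ``near a non-degenerate intersection,'' and then set $\phi(x,\xi)=\gamma'(t(x,\xi))\cdot\xi$. The $t$-derivative of the equation you are inverting is $-\gamma'(t)\cdot\xi$, and this vanishes precisely on $\pi_R(\Sigma)$: a covector $(x,\xi)$ lies in $\pi_R(\Sigma)$ exactly when the plane $H(x,\xi)$ passes through $\gamma(t)$ \emph{tangentially}, i.e.\ when both $\xi\cdot(x-\gamma(t))=0$ and $\gamma'(t)\cdot\xi=0$ hold. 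So on the transversal branches where your $t(x,\xi)$ is smooth, your $\phi$ is nowhere zero, and as $(x,\xi)$ approaches the set you need to describe, two roots of $\xi\cdot(x-\gamma(t))=0$ merge (this is the fold of $\pi_R$ along $\Sigma$) and the branch ceases to exist as a smooth single-valued function. Your construction therefore produces a defining function only on a region disjoint from its own zero set, and the subsequent computation of $H_\phi$ has no meaning on $\pi_R(\Sigma)$.

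The repair is to swap the roles of the two equations: define $t(x,\xi)$ by solving $\gamma'(t)\cdot\xi=0$, which is legitimate because $\partial_t\left(\gamma'(t)\cdot\xi\right)=\gamma''(t)\cdot\xi\neq 0$ on the relevant conic set---this is precisely the nondegeneracy built into the definition of $\Xi''$ and the clean-intersection hypothesis---and then put $\phi(x,\xi)=\left(x-\gamma(t(x,\xi))\right)\cdot\xi$. With this choice the troublesome chain-rule terms vanish identically (not just on the hypersurface), since $\gamma'(t(x,\xi))\cdot\xi\equiv 0$, so one gets exactly $d\phi=\xi\,dx+\left(x-\gamma(t)\right)d\xi$, which is nonvanishing, and $H_\phi=\left(x-\gamma(t)\right)\cdot\partial_x-\xi\cdot\partial_\xi$ with $t$ constant along the flow. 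The bicharacteristics are $\left(\gamma(t)+e^{\sigma}\tilde{\tau}\,\omega,\ e^{-\sigma}\xi\right)$, and the pairs of points on a common leaf form $\left\{\left(x,\xi;y,\frac{\tau}{\tilde{\tau}}\xi\right)\right\}$ as in the lemma, with the dimension count $6=5+1$ as you state. Once this substitution is made, the remainder of your argument (automatic coisotropy of a hypersurface, homogeneity of degree one, and the verification in adapted coordinates---where the flow is the radial direction in $x$ \emph{together with} the compensating dilation in $\xi$, not $\partial_s$ alone) goes through essentially as you sketch it.
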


We want to study the operators $\mathcal{M}$ and $\mathcal{N}$ in terms of $I^{p, l}$ classes of distributions. For more details about the $I^{p, l}$ classes, we refer to \cite{Uhlmann_Greenleaf_Nonlocal_inversion, Melrose_Uhlmann_Lagrangian_intersection, Guillemin_Uhlmann_Oscillatory_integrals}. Here, we mention some important properties of the $I^{p, l}$ class of distributions that are required for further discussion.

Let $\Delta$ and $\Lambda$ be two cleanly intersecting Lagrangians with intersection $\Sigma = \Delta \cap \Lambda$ and let $u \in I^{p, l} (\Delta, \Lambda)$, then

\begin{enumerate}
    \item $WF(u) \subset \Delta \cup \Lambda.$
    \item Microlocally, the Schwartz kernel of $u$ equals the Schwartz kernel of a pseudodifferential operator of order $p + l$ on $\Delta \symbol{92} \Lambda$ and that of a classical Fourier integral operator of order $p$ on $\Lambda \symbol{92} \Delta$.
    \item $I^{p, l} \subset I^{p', l'}$ if $p \leq p'$ and $l \leq l'$.
    \item $\cap_l I^{p, l} (\Delta, \Lambda) \subset I^p(\Lambda)$.
    \item $\cap_p I^{p, l} \subset $ The class of smoothing operators.
    \item The principal symbol $\sigma_0(u)$ on $\Delta \symbol{92} \Sigma$ has the singularity on $\Sigma$ as a conormal distribution of order $l - \frac{k}{2}$, where $k$ is the codimension of $\Sigma$ as a submanifold of $\Delta$ or $\Lambda$.
    \item If the principal symbol $\sigma_0(u) = 0$ on $\Delta \symbol{92} \Sigma$, then $u \in I^{p, l - 1} (\Delta, \Lambda) + I^{p - 1, l} (\Delta, \Lambda)$.
    \item $u$ is said to be elliptic if the principal symbol $\sigma_0(u) \neq 0$ on $\Delta \symbol{92} \Sigma$ if $k \geq 2$, and for $k = 1$, if $\sigma_0(u) \neq 0$ on each connected component of $\Delta \symbol{92} \Sigma$.
\end{enumerate}

Since $\Lambda$ in our case arises as a flowout, the following composition calculus by Antoniano and Uhlmann can be used to construct a relative left parametrix for our operator $\mathcal{N}$.

\begin{theorem}\cite{Uhlmann_composition_calculus} \label{Uhlmann_composition_calculus}
    If $A \in I^{p,l}(\Delta, \Lambda)$ and $B \in I^{p',l'}$, then composition of $A$ and $B$, $A \circ B \in I^{p + p' + \frac{k}{2}, l + l' - \frac{k}{2}} (\Delta, \Lambda)$ and the principal symbol, $\sigma_0 (A \circ B) = \sigma_0(A) \sigma_0(B)$, where, $k$ is the codimension of $\Sigma$ as a submanifold of either $\Delta$ or $\Lambda$.
\end{theorem}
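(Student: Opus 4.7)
The plan is to establish the composition formula by exploiting the flowout structure of $\Lambda$ to reduce to a microlocal normal form, then carry out the composition as an oscillatory integral with a product-type symbol and read off the order and principal symbol via stationary phase.

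First I would fix a point in $\Sigma$ and apply a homogeneous canonical transformation to put the pair $(\Delta, \Lambda)$ into a standard model. Since $\Lambda$ arises as a flowout from the submanifold $\pi_R(\Sigma) \subset \Delta$, there exist local canonical coordinates in which $\Delta$ is identified with (a piece of) the diagonal Lagrangian and $\Lambda$ with the conormal bundle of a coordinate subspace. In these coordinates, any $u \in I^{p,l}(\Delta, \Lambda)$ admits a representation
\begin{equation*}
u = \int e^{i \phi(x, \theta, \sigma)} a(x, \theta, \sigma) \, d\theta \, d\sigma,
\end{equation*}
where $\phi$ simultaneously parametrizes $\Delta$ and $\Lambda$, the variables $\sigma \in \mathbb{R}^k$ correspond to conormal directions transverse to $\Sigma$ inside $\Delta$, and $a$ is a product-type symbol whose orders in $\theta$ and $\sigma$ are pinned down by $(p, l)$ through the Guillemin--Uhlmann convention.

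Next I would substitute these oscillatory representations of $A$ and $B$ into the kernel composition
\begin{equation*}
K_{A \circ B}(x, z) = \int K_A(x, y) \, K_B(y, z) \, dy
\end{equation*}
and carry out the $y$-integration. Stationary phase in the variables that can be eliminated produces a new oscillatory integral whose phase again simultaneously parametrizes the pair $(\Delta, \Lambda)$. The decisive bookkeeping is a dimension count: the $y$-integration kills $n$ variables, stationary phase restores $n - k$ with the usual $|\det \phi''|^{-1/2}$ factor, and the product-type structure of the symbol survives precisely because the flowout structure of $\Lambda$ is preserved under composition. Tallying the resulting orders gives the claimed shift by $k/2$, so that the Fourier integral order becomes $p + p' + k/2$ and the conormal order on $\Sigma$ becomes $l + l' - k/2$. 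For the symbol, evaluating at the critical manifold yields the pointwise product $\sigma_0(A)\,\sigma_0(B)$; this matches the standard pseudodifferential composition on $\Delta \setminus \Sigma$ and the classical FIO composition on $\Lambda \setminus \Delta$, and it extends across $\Sigma$ by continuity of the product-type symbol.

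The main technical obstacle is controlling the behaviour at $\Sigma$ itself, where the Fourier integral order on $\Lambda$ and the conormal order along $\Sigma$ have to match up consistently, and where the Maslov factor and half-density bookkeeping can introduce spurious contributions if not tracked with care. This is exactly the difficulty resolved by the Antoniano--Uhlmann product-type calculus; accordingly, my proof would mirror their strategy of reducing to the model above, where the relevant composition integral can be evaluated essentially explicitly and the shifted orders and multiplicative symbol law read off directly.
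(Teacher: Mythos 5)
This statement is not proved in the paper at all: it is quoted verbatim from Antoniano--Uhlmann \cite{Uhlmann_composition_calculus} and used as a black box, so there is no internal proof to compare against. Measured against the actual proof in the cited literature, your outline does reproduce the correct high-level strategy --- conjugate $(\Delta,\Lambda)$ to the Guillemin--Uhlmann model pair (diagonal plus flowout of a coisotropic of codimension $k$), represent elements by oscillatory integrals with product-type symbols, compose in the model, and read off orders and the multiplicative symbol law. So the approach is the right one.

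As a standalone proof, however, the attempt has genuine gaps, and one of them is circular. First, the final paragraph openly defers the ``main technical obstacle'' to ``exactly the difficulty resolved by the Antoniano--Uhlmann product-type calculus'' --- but that calculus is precisely the theorem you are asked to prove, so this is an appeal to the conclusion rather than an argument. Second, the step where the $k/2$ shifts appear is asserted, not derived: the bookkeeping ``$y$-integration kills $n$ variables, stationary phase restores $n-k$'' does not by itself produce $I^{p+p'+\frac{k}{2},\,l+l'-\frac{k}{2}}$; you need the explicit dictionary between $(p,l)$ and the bi-homogeneous symbol orders in the model representation (which you leave unspecified as ``the Guillemin--Uhlmann convention''), and you must then verify the hard analytic fact that the composed amplitude again satisfies product-type estimates in $(\xi,\sigma)$ uniformly across $\Sigma$ --- stationary phase alone is not valid there, since the phase degenerates exactly where $\Lambda$ meets $\Delta$, which is why the model composition must be estimated directly rather than by the standard transverse-composition FIO theorem. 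Third, before any symbol computation you must check the geometry of the composition of canonical relations, namely that $\Delta\circ\Delta=\Delta$, $\Delta\circ\Lambda\cup\Lambda\circ\Delta\subset\Lambda$, and $\Lambda\circ\Lambda\subset\Lambda$ with clean composition; this is where the flowout hypothesis is actually used (for a general cleanly intersecting pair, $\Lambda\circ\Lambda$ can generate a new Lagrangian and the class is not closed under composition). You mention the flowout structure only as a device for the normal form, not as the hypothesis guaranteeing closure. Finally, the claim that conjugation by elliptic Fourier integral operators preserves the class $I^{p,l}$, and that a single phase function can simultaneously parametrize both Lagrangians with an equivalence-of-phase theorem, are themselves nontrivial inputs from \cite{Guillemin_Uhlmann_Oscillatory_integrals} that the sketch takes for granted. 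In short: correct skeleton, matching the cited source, but the quantitative core of the theorem --- the order shift and the survival of product-type estimates through composition, together with the clean-composition geometry --- is exactly what remains unproved in your write-up.
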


\noindent We now state the main theorem of this article.

\begin{theorem} \label{thm: main theorem}
    Let $\Xi_0 \subseteq \Xi'$ be such that $\overline{\Xi}_0 \subseteq \Xi' \cup \Xi''$ and $K$ be a closed conic subset of $\Xi_0$. Let $\mathcal{E}'_{K} (B) \subset \mathcal{E}' (B)$ denote the space of compactly supported distributions in $B$ whose wavefront set is contained in $K$. Then there exists an operator $\mathcal{B} \in I^{0, 1}(\Delta, \Lambda)$ and an operator $\mathcal{A} \in I^{-1/2}(\Lambda)$ such that for any second-order tensor field $f$ with coefficients in $\mathcal{E}'_{K} (B)$, we have for each $x \in \pi_1 (K)$

    \begin{equation*}
        \mathcal{B} \mathcal{N} f (x) = f^{s} (x) + \mathcal{A} f (x) + \text{ smoothing terms}.
    \end{equation*}
\end{theorem}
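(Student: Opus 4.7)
The plan is to build the parametrix $\mathcal{B}$ by microlocally inverting the principal symbol of the normal operator $\mathcal{N}=\mathcal{M}^{*}\mathcal{M}$ modulo its natural kernel, and then using the composition calculus of Theorem \ref{Uhlmann_composition_calculus} to package the remaining error as an honest Fourier integral operator on $\Lambda$. The preliminary material already supplies the starting ingredients: $\mathcal{M}$ is an FIO of order $-1/2$ with blowdown-fold projections, $WF(\mathcal{N})\subset\Delta\cup\Lambda$ with $\Lambda$ a flowout from $\pi_{R}(\Sigma)$, and $\Delta\cap\Lambda$ is a smooth codimension-$1$ clean intersection. Combining these facts with Theorem \ref{Uhlmann_composition_calculus} (taking $k=1$) places $\mathcal{N}$ in an $I^{p_{0},l_{0}}(\Delta,\Lambda)$ class whose orders are determined by the FIO order of $\mathcal{M}$.

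The technical heart of the proof is a principal-symbol calculation. On $\Delta\setminus\Sigma$, $\sigma_{0}(\mathcal{N})(x,\xi)$ is a matrix acting on the components of a $2$-tensor. The decomposition $f=f^{s}+\D'u+\lambda w$ together with the identities $\mathcal{M}(\D'u)=\mathcal{M}(\lambda w)=0$ singles out an explicit subspace on which this matrix vanishes, namely tensors whose Fourier representations have the form $\xi_{i}u_{j}$ or $\delta_{ij}w$. I would compute $\sigma_{0}(\mathcal{N})(x,\xi)$ directly from the parametrisation of $N^{*}Z$ set up in Section~\ref{sec: definitions} and from the integral formula \eqref{eq:mixed ray transform}, and then sum the contributions coming from the three pairwise-independent points $\gamma(t_{1}),\gamma(t_{2}),\gamma(t_{3})\in\gamma\cap H(x,\xi)$ guaranteed by membership in $\Xi'$. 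The central algebraic claim to verify is that these three contributions, together with the known kernel, produce a symbol whose restriction to the solenoidal complement is invertible at every $(x,\xi)\in\Xi'$.

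Given a relative inverse at the symbol level, standard quantisation produces a pseudodifferential operator $\mathcal{B}_{0}$ such that $\sigma_{0}(\mathcal{B}_{0}\mathcal{N})$ equals the projection onto solenoidal tensors on $\Delta\setminus\Sigma$. A Neumann-series correction carried out entirely inside the $I^{p,l}$-calculus — using properties (3), (6), and (7) of the class listed in Section~\ref{sec: definitions} — then upgrades $\mathcal{B}_{0}$ to an operator $\mathcal{B}\in I^{0,1}(\Delta,\Lambda)$ for which $\mathcal{B}\mathcal{N}f-f^{s}$ is microlocally smoothing on $\Delta\setminus\Lambda$. Tracking the orders through Theorem \ref{Uhlmann_composition_calculus} shows that whatever remains has wavefront set contained in $\Lambda$ alone and is a classical FIO of order $-1/2$; this is the operator $\mathcal{A}$. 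The hypothesis $\overline{\Xi}_{0}\subseteq\Xi'\cup\Xi''$ ensures that the clean-intersection and symbol-invertibility conditions persist on the closure of the relevant set, and the nondegeneracy $\langle\gamma''(t_{i}),\xi\rangle\neq 0$ inside $\Xi''$ prevents new flowouts from forming at tangential points.

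The step I expect to be the main obstacle is the explicit construction and inversion of $\sigma_{0}(\mathcal{N})$ modulo the kernel. Because $\mathcal{M}=(\mathcal{M}_{\alpha},\mathcal{M}_{\beta})$ has two components, $\sigma_{0}(\mathcal{N})$ is a genuinely larger and more coupled matrix than in the longitudinal or transverse ray-transform settings of \cite{Microlocal_2018, Microlocal_2021}; verifying that its corank is exactly the dimension of the space of $\D'u+\lambda w$-type symbols — and hence that the Kirillov-Tuy data is both necessary and sufficient for recovery of $f^{s}$ — is the delicate linear-algebraic step on which the rest of the argument hinges.
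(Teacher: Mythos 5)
Your proposal is correct and follows essentially the same route as the paper: the paper computes $\sigma_{0}(\mathcal{N})$ on $\Delta\setminus\Sigma$ (Proposition \ref{prop: principal symbol}), proves precisely the ellipticity-on-trace-free-solenoidal-tensors claim you flag as the crux (Proposition \ref{prop: ellipticity proposition}, by explicit spherical-coordinate linear algebra at the three Kirillov--Tuy points, giving the rank-$5$ symbol $A_{0}=VV^{t}$ with corank $4$ matching the $\D'u+\lambda w$ kernel), and inverts it relatively via a singular value decomposition, cut off to $\Xi_{0}$, to obtain $\mathcal{B}_{0}\in I^{0,1}(\Delta,\Lambda)$ with $\sigma_{0}(\mathcal{B}_{0}\mathcal{N})=\sigma(f^{s})$. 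The iterative correction you describe as a Neumann series is exactly the paper's recursive construction of $\mathcal{B}_{1},\mathcal{B}_{2},\dots$ through the symbol exact sequence and the composition calculus of Theorem \ref{Uhlmann_composition_calculus}, with $\cap_{p}I^{p,l}\subset C^{\infty}$ and $\cap_{l}I^{p,l}(\Delta,\Lambda)\subset I^{p}(\Lambda)$ producing the smoothing term and $\mathcal{A}\in I^{-1/2}(\Lambda)$, just as you anticipated.
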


We start the proof of the above theorem by stating the principle symbol of the operator $\mathcal{N}$ on the diagonal $\Delta$ away from $\Sigma$ and then using it to construct a relative left parametrix for the operator. The theorem is proved by making suitable changes to the techniques of \cite{Uhlmann_Greenleaf_Nonlocal_inversion, Microlocal_2018, Uhlmann_microlocal_scalar_2003, Microlocal_doppler_transform, Microlocal_2021}.

\section{Principal symbol of the operator $\mathcal{N}f$} \label{section: Principal symbol}
The following proposition gives the expression for the principal symbol matrix of the normal operator $\mathcal{N}$.

\begin{proposition} \label{prop: principal symbol}
    Let $(x, \xi) \in K$. Then the principal symbol matrix $A_0(x, \xi)$ of the operator $\mathcal{N}$ is as follows:

    \begin{equation}
        A_0(x, \xi) = \sum_q \frac{2 \pi \omega_q^{i_1} {(\omega_q)}_{\alpha}^{i_2} \omega_q^{j_1} {(\omega_q)}_{\alpha}^{j_2}}{|\xi| |\gamma' (t_q(\xi_0)\cdot \xi_0)| |\gamma (t_q(\xi_0) - x)|} + \sum_q \frac{2 \pi \omega_q^{i_1} {(\omega_q)}_{\beta}^{i_2} \omega_q^{j_1} {(\omega_q)}_{\beta}^{j_2}}{|\xi| |\gamma' (t_q(\xi_0)\cdot \xi_0)| |\gamma (t_q(\xi_0) - x)|}
    \end{equation}
    Here, $\xi_0$ is the unit vector along $\xi$ direction and $q$ varies over the number of intersection points of the plane $H(x, \xi)$ with the curve $\gamma$.
\end{proposition}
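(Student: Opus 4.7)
The plan is to compute the principal symbol of $\mathcal{N} = \mathcal{M}_\alpha^* \mathcal{M}_\alpha + \mathcal{M}_\beta^* \mathcal{M}_\beta$ on the diagonal Lagrangian $\Delta$ away from $\Sigma$ by representing the normal operator as an oscillatory integral and extracting its leading-order symbol. The two summands are structurally identical, so I would analyze $\mathcal{N}_\alpha$ in detail and obtain the $\beta$-contribution by replacing $\omega_\alpha$ with $\omega_\beta$ throughout.

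First, I would parametrize the lines through a fixed base point $x$ by the curve: for each $t$ with $\gamma(t) \neq x$, set $\omega_t = (\gamma(t) - x)/|\gamma(t) - x|$ so that $x$ lies on the line $\gamma(t) + s\omega_t$. A direct computation of the formal $L^2$-adjoint of $\mathcal{M}_\alpha$, using the change of variables $y = \gamma(t) + s\omega$ on line space (whose Jacobian is $|y - \gamma(t)|^2$), yields
\[
\mathcal{N}_\alpha f(x) = \int dt \int_{\R} ds\; \frac{\omega_t^{i_1}(\omega_t)_\alpha^{i_2}\, \omega_t^{j_1}(\omega_t)_\alpha^{j_2}}{|\gamma(t) - x|^2}\, f_{j_1 j_2}(x + s \omega_t).
\]

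Next, I would extract the principal symbol by writing $f$ in terms of its Fourier transform and performing the $s$- and $t$-integrations. The $s$-integration produces $2\pi\,\delta(\xi \cdot \omega_t)$, which concentrates $t$ at the values $t_q = t_q(\xi_0)$ for which $\omega_{t_q} \perp \xi$, i.e., precisely at the intersection points $\gamma(t_q) \in H(x,\xi) \cap \gamma$ appearing in the definition of $\Xi$. A short computation using the orthogonality $\omega_{t_q} \perp \xi$ gives $|\xi \cdot \omega'(t_q)| = |\xi|\,|\gamma'(t_q) \cdot \xi_0|/|\gamma(t_q) - x|$, so the $\delta$-function identity
\[
\int dt\, \delta(\xi \cdot \omega_t)\, \Phi(t) = \sum_q \frac{\Phi(t_q)}{|\xi \cdot \omega'(t_q)|}
\]
combines with the $|\gamma(t) - x|^{-2}$ factor in the kernel to produce exactly the stated weighting $|\xi|^{-1}\,|\gamma'(t_q(\xi_0)) \cdot \xi_0|^{-1}\,|\gamma(t_q(\xi_0)) - x|^{-1}$. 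Summing over $q$ and adding the analogous $\beta$-contribution yields the claimed formula for $A_0(x,\xi)$, with the overall $|\xi|^{-1}$ decay matching the order $-1$ pseudodifferential character of $\mathcal{N}$ on $\Delta \setminus \Sigma$.

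The main obstacle will be a clean bookkeeping of two things: the separation of the kernel contributions to $\Delta$ versus $\Lambda$, so that the identified symbol is genuinely the pseudodifferential principal symbol on $\Delta \setminus \Sigma$ and the remaining singular pieces fit into the flowout Lagrangian $\Lambda$ identified in the preceding lemma; and the careful handling of tangential intersections under the $\Xi''$-hypothesis, where the naive stationary-phase Jacobian $|\xi \cdot \gamma'(t_q)|$ degenerates but the condition $\langle \gamma''(t_q), \xi \rangle \neq 0$ built into $\Xi''$ rescues the computation by making the critical point non-degenerate at second order. Up to these standard but delicate issues, the argument is a direct extension of the calculations in \cite{Microlocal_thesis, Microlocal_2018, Microlocal_doppler_transform, Microlocal_2021} to the mixed-ray-transform tensor weights $\omega^i \omega_\alpha^j$ and $\omega^i \omega_\beta^j$.
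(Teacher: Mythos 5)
Your proposal is correct and follows essentially the same route as the paper: the paper itself omits the proof, deferring to the analogous computations in \cite{Microlocal_doppler_transform, Microlocal_2018, Microlocal_thesis}, and your sketch (the adjoint via the Jacobian $|y-\gamma(t)|^{2}$, the $s$-integral producing $2\pi\,\delta(\xi\cdot\omega_t)$, and the $t$-integral contributing the factor $|\xi\cdot\omega'(t_q)|^{-1} = |\gamma(t_q)-x|\big/\bigl(|\xi|\,|\gamma'(t_q)\cdot\xi_0|\bigr)$ at each intersection point of $H(x,\xi)$ with $\gamma$) is exactly that standard backprojection--stationary-phase computation, carried out with the mixed-ray weights $\omega\otimes\omega_\alpha$ and $\omega\otimes\omega_\beta$. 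Your derivation moreover confirms the intended reading of the typographical slips in the stated formula, namely that $|\gamma' (t_q(\xi_0)\cdot \xi_0)|$ and $|\gamma (t_q(\xi_0) - x)|$ should be $|\gamma' (t_q(\xi_0))\cdot \xi_0|$ and $|\gamma (t_q(\xi_0)) - x|$.
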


\noindent The proof of the above proposition is similar to \cite{Microlocal_doppler_transform, Microlocal_2018, Microlocal_thesis} and hence we skip the details here. We prove the ellipticity of the symbol on the solenoidal part in the following proposition.
\begin{remark} \label{remark: generalization}
The principal symbol matrix of $\mathcal{N}$ can be computed for tensor fields of arbitrary order $(k + \ell)$ by following exactly similar analysis. The main difficulty comes in proving the ellipticity of this symbol over tensor fields that are trace-free and divergence-free. At this point, we do not know how to incorporate the trace-free conditions (appropriately) along with the other linearly independent conditions to show the ellipticity for the general case. Even for a $2+2$ tensor field, the expressions become quite complicated, and we could not find a way to simplify them to obtain the required ellipticity.
\end{remark}
\begin{proposition} \label{prop: ellipticity proposition}
    Let $f$ be a $2$-tensor field in $\FR^3$ such that $\xi^{i_1} f_{i_1 i_2} = 0$ and $f$ is trace-free, that is, $\sum_{i} f_{ii} = 0$. If $A_0 f = 0$, then $f = 0$.
\end{proposition}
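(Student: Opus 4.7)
The plan is to choose coordinates adapted to $\xi$, use the two linear constraints to reduce $f$ from nine to five free components, rewrite the pointwise identity $A_0(x,\xi)f = 0$ as a sum of rank-one 2-tensors indexed by the intersection points $\gamma(t_q) \in H(x,\xi)\cap\gamma$, and extract enough independent linear relations from that sum to force all five remaining components to vanish.

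Fix $\xi_0 = \xi/|\xi|$ and complete it to an orthonormal basis $\{\xi_0, e_1, e_2\}$ of $\mathbb{R}^3$. In this basis, $\xi^{i_1} f_{i_1 i_2} = 0$ kills the $\xi_0$-row of the matrix of $f$, and trace-freeness reduces to $f_{22} = -f_{11}$, so only the entries $f_{10}, f_{20}, f_{11}, f_{12}, f_{21}$ remain. Every $\omega_q := (\gamma(t_q)-x)/|\gamma(t_q)-x|$ lies in $\xi^\perp$, so I write $\omega_q = \cos\theta_q\, e_1 + \sin\theta_q\, e_2$ and introduce its in-plane orthogonal companion $\omega_q^{\ast} = -\sin\theta_q\, e_1 + \cos\theta_q\, e_2$. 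Since the pair $((\omega_q)_\alpha, (\omega_q)_\beta)$ enters the summand of Proposition~\ref{prop: principal symbol} only through the projector $\delta^{i_2 j_2} - \omega_q^{i_2}\omega_q^{j_2}$ onto $\omega_q^\perp$, I may freely replace it by the adapted pair $(\xi_0, \omega_q^{\ast})$. The Kirillov--Tuy condition built into $K \subseteq \Xi'$ then provides at least three indices $q$ for which the $\omega_q$ are pairwise non-parallel, equivalently three angles $\theta_q$ that are pairwise distinct modulo $\pi$.

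With these substitutions the equation becomes
\[\sum_q c_q\, \omega_q^{i_1} w_q^{i_2} = 0, \qquad c_q > 0,\]
where $w_q \in \omega_q^\perp$ is the orthogonal projection of $v_q := f^T\omega_q$ onto $\mathrm{span}\{\xi_0, \omega_q^{\ast}\}$. Writing $w_q = \alpha_q\, \xi_0 + \beta_q\, \omega_q^{\ast}$ and using that $\xi_0 \perp \omega_q$ for every $q$, the identity splits into
\[\sum_q c_q\, \alpha_q\, \omega_q = 0 \ \ \text{in } \xi^\perp, \qquad \sum_q c_q\, \beta_q\, \omega_q \otimes \omega_q^{\ast} = 0 \ \ \text{in } \xi^\perp \otimes \xi^\perp.\]
A routine computation in the basis yields $\alpha_q = f_{10}\cos\theta_q + f_{20}\sin\theta_q$ and $\beta_q = A + B\cos 2\theta_q + C\sin 2\theta_q$, where $(A,B,C)$ is an invertible linear function of $(f_{11}, f_{12}, f_{21})$. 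The first system is equivalent to $\bigl(\sum_q c_q\, \omega_q \omega_q^T\bigr)(f_{10}, f_{20})^T = 0$, and its coefficient matrix is positive definite as soon as two of the $\omega_q$ are non-parallel, forcing $f_{10} = f_{20} = 0$. For the second, I split $\omega_q \otimes \omega_q^{\ast} = \tfrac12 J + T_q$ into its $q$-independent antisymmetric part $J$ and a symmetric trace-free part $T_q$ that depends only on $2\theta_q$; matching the two pieces yields the three scalar identities
\[\sum_q c_q \beta_q = 0, \qquad \sum_q c_q \beta_q \cos 2\theta_q = 0, \qquad \sum_q c_q \beta_q \sin 2\theta_q = 0,\]
which form a symmetric linear system in $(A,B,C)$ whose matrix is the weighted Gram matrix of $\{1, \cos 2\theta, \sin 2\theta\}$ at the nodes $\theta_q$. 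Three distinct points on the unit circle are affinely independent, so this Gram matrix is positive definite and $A = B = C = 0$; together with trace-freeness this gives $f = 0$.

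I expect the main technical point to be the decoupling step: a priori the $\xi_0$-column and the $\omega_q^{\ast}$-column parts of $\omega_q \otimes w_q$ could cancel against each other after summation over $q$, and what rescues the argument is the observation that the antisymmetric part of $\omega_q \otimes \omega_q^{\ast}$ is independent of $q$, so it can be matched separately from the symmetric part. This is also the only step where the Kirillov--Tuy pairwise non-parallelism is essentially used, via the positive-definiteness of the two weighted Gram matrices.
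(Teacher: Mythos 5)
Your proof is correct, and it takes a genuinely different route from the paper's. The paper's argument begins with a positivity step you never use: contracting $A_0 f = 0$ against $f$ itself turns the identity into a positively weighted sum of squares, which forces the scalar equations $\omega_q^{i_1}{(\omega_q)}_\alpha^{i_2} f_{i_1 i_2} = \omega_q^{i_1}{(\omega_q)}_\beta^{i_2} f_{i_1 i_2} = 0$ separately for each Kirillov--Tuy point $q = 1,2,3$; it then adjoins three contractions of the divergence condition $\xi^{i_1} f_{i_1 i_2} = 0$ with ${(\omega_1)}_\alpha, {(\omega_2)}_\alpha, {(\omega_1)}_\beta$, writes all eight-plus-trace equations in a spherical frame with a common angle $\beta_1$ and distinct $\alpha_q$ (the analogue of your adapted frame, with ${(\omega_q)}_\beta$ chosen along $\xi$), and finishes by explicit $2\times 2$ determinant eliminations using $\sin(\alpha_1 - \alpha_2) \neq 0$. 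You instead exploit at the outset that the pair $({(\omega_q)}_\alpha, {(\omega_q)}_\beta)$ enters the symbol only through the projector onto $\omega_q^\perp$ --- a fact the paper uses only implicitly through its choice of frame --- reduce $f$ to five components before touching $A_0$, split the tensor identity $\sum_q c_q\, \omega_q \otimes w_q = 0$ along the orthogonal decomposition $\mathbb{R}^3 = \mathbb{R}\xi_0 \oplus \xi^\perp$ in the second slot, and within the in-plane block separate the $q$-independent antisymmetric part $\tfrac12 J$ from the symmetric trace-free part $T_q$; everything then reduces to positive definiteness of two weighted Gram matrices (two non-parallel $\omega_q$ suffice for the first; for the second, a nonzero trigonometric polynomial $A + B\cos 2\theta + C\sin 2\theta$ has at most two zeros modulo $2\pi$, so three nodes $\theta_q$ distinct modulo $\pi$ suffice). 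Both proofs rest on the same geometric input --- three pairwise non-parallel in-plane directions --- but yours is basis-light and handles all intersection points of $H(x,\xi) \cap \gamma$ uniformly, since extra points only add positive semidefinite contributions to the Gram matrices, whereas the paper's sum-of-squares contraction is what licenses its restriction to exactly three points before the coordinate computation; your observation that the antisymmetric part of $\omega_q \otimes \omega_q^\ast$ is $q$-independent is the invariant counterpart of the paper's step of subtracting the two equations in \eqref{eq: 6} to obtain $f_{12} = f_{21}$. One small correction to your own commentary: the decoupling of the $\xi_0$-column from the $\omega_q^\ast$-column is not delicate at all --- those pieces live in the orthogonal subspaces $\xi^\perp \otimes \mathbb{R}\xi_0$ and $\xi^\perp \otimes \xi^\perp$, so they can never cancel against each other; the only genuine cancellation risk is inside the in-plane block, and that is exactly what your $J$ versus $T_q$ split resolves.
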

\begin{proof}
    We have $A_0 f = 0$, that is, 
    \begin{equation*}
        \sum_q \frac{2 \pi \omega_q^{i_1} {(\omega_q)}_{\alpha}^{i_2} \omega_q^{j_1} {(\omega_q)}_{\alpha}^{j_2} f_{i_1 i_2}}{|\xi| |\gamma' (t_q(\xi_0)\cdot \xi_0)| |\gamma (t_q(\xi_0) - x)|} + \sum_q \frac{2 \pi \omega_q^{i_1} {(\omega_q)}_{\beta}^{i_2} \omega_q^{j_1} {(\omega_q)}_{\beta}^{j_2} f_{i_1 i_2}}{|\xi| |\gamma' (t_q(\xi_0)\cdot \xi_0)| |\gamma (t_q(\xi_0) - x)|} = 0.
    \end{equation*}
    Multiplying the above equation by $f_{j_1 j_2}$ and adding, we get
    \begin{equation*}
        \sum_q \frac{2 \pi \omega_q^{i_1} {(\omega_q)}_{\alpha}^{i_2} \omega_q^{j_1} {(\omega_q)}_{\alpha}^{j_2} f_{i_1 i_2} f_{j_1 j_2}}{|\xi| |\gamma' (t_q(\xi_0)\cdot \xi_0)| |\gamma (t_q(\xi_0) - x)|} + \sum_q \frac{2 \pi \omega_q^{i_1} {(\omega_q)}_{\beta}^{i_2} \omega_q^{j_1} {(\omega_q)}_{\beta}^{j_2} f_{i_1 i_2} f_{j_1 j_2}}{|\xi| |\gamma' (t_q(\xi_0)\cdot \xi_0)| |\gamma (t_q(\xi_0) - x)|} = 0.
    \end{equation*}
    This implies
    \begin{equation*}
        \sum_q \frac{2 \pi {(\omega_q^{i_1} {(\omega_q)}_{\alpha}^{i_2} f_{i_1 i_2})}^2}{|\xi| |\gamma' (t_q(\xi_0)\cdot \xi_0)| |\gamma (t_q(\xi_0) - x)|} + \sum_q \frac{2 \pi {(\omega_q^{i_1} {(\omega_q)}_{\beta}^{i_2} f_{i_1 i_2})}^2}{|\xi| |\gamma' (t_q(\xi_0)\cdot \xi_0)| |\gamma (t_q(\xi_0) - x)|} = 0.
    \end{equation*}
    By Kirillov-Tuy condition, we get $q = 1, 2, 3$. Hence, we have
    \begin{equation} \label{eq: ellipticity eq 1}
        \omega_q^{i_1} {(\omega_q)}_{\alpha}^{i_2} f_{i_1 i_2} = 0 \quad \text{ and } \quad \omega_q^{i_1} {(\omega_q)}_{\beta}^{i_2} f_{i_1 i_2} = 0; \quad q = 1, 2, 3.
    \end{equation}
    Further, multiplying ${(\omega_1)}_{\alpha}^{i_2}$, ${(\omega_2)}_{\alpha}^{i_2}$ and ${(\omega_1)}_{\beta}^{i_2}$ to equation $\xi^{i_1} f_{i_1 i_2} = 0$ and adding, we get three more equations as follows:
    \begin{equation} \label{eq: ellipticity eq 2}
        \xi^{i_1} {(\omega_1)}_{\alpha}^{i_2} f_{i_1 i_2} = 0, \quad \xi^{i_1} {(\omega_2)}_{\alpha}^{i_2} f_{i_1 i_2} = 0 \quad \mbox{ and } \quad \xi^{i_1} {(\omega_1)}_{\beta}^{i_2} f_{i_1 i_2} = 0.
    \end{equation}
    Without loss of generality, we choose a spherical coordinate system such that $\omega_q$ and ${(\omega_q)}_\alpha$ are parallel to the plane $H(x, \xi)$ and ${(\omega_q)}_{\beta}$ is in the direction of $\xi$. Then writing $\omega_q$, ${(\omega_q)}_{\alpha}$ and ${(\omega_q)}_{\beta}$ for $q = 1, 2, 3$ in spherical coordinates, we have
    \begin{align*}
        \omega_q &= (\sin \alpha_q \cos \beta_1, \sin \alpha_q \sin \beta_1, \cos \alpha_q)\\
        {(\omega_q)}_{\alpha} &= (\cos \alpha_q \cos \beta_1, \cos \alpha_q \sin \beta_1, -\sin \alpha_q)\\
        {(\omega_q)}_{\beta} &= (-\sin \beta_1, \cos \beta_1, 0)
    \end{align*}
    with $\alpha_i \neq \alpha_j \pm \pi$ for $i \neq j$. Substituting the above expressions in equations \eqref{eq: ellipticity eq 1} and \eqref{eq: ellipticity eq 2} for certain values of $q$ as required, we get the following sets of equations:
    \begin{align} \label{eq: 1st set}
    \begin{split}
        \sin \alpha_q &\cos \alpha_q \left\{\cos^2 \beta_1 f_{11} + \sin^2 \beta_1 f_{22} - f_{33} + \sin \beta_1 \cos \beta_1 (f_{12} + f_{21}) \right\}\\
        & - \sin^2 \alpha_q \left\{\cos \beta_1 f_{13} + \sin \beta_1 f_{23}\right\} + \cos^2 \alpha_q \left\{\cos \beta_1 f_{31} + \sin \beta_1 f_{32}\right\} = 0; \quad q = 1, 2, 3,
    \end{split}        
    \end{align}
    
    \begin{align} \label{eq: 2nd set}
    \begin{split}
        \sin \alpha_q &\left\{\sin \beta_1 \cos \beta_1 (f_{11} - f_{22}) - \cos^2 \beta_1 f_{12} + \sin^2 \beta_1 f_{21}\right\}\\
        & \hspace{4cm} + \cos \alpha_q \left\{\sin \beta_1 f_{31} - \cos \beta_1 f_{32}\right\} = 0; \quad q = 1, 2,
    \end{split}
    \end{align}

    \begin{align} \label{eq: 3rd set}
    \begin{split}
        - \cos \alpha_q &\left\{\sin \beta_1 \cos \beta_1 (f_{11} - f_{22}) + \sin^2 \beta_1 f_{12} - \cos^2 \beta_1 f_{21}\right\}\\
        & \hspace{4cm} + \sin \alpha_q \left\{\sin \beta_1 f_{13} - \cos \beta_1 f_{23}\right\} = 0; \quad q = 1, 2,
    \end{split}
    \end{align}
    and
    \begin{align} \label{eq: 4}
        \sin^2 \beta_1 f_{11} + \cos^2 \beta_1 f_{22} - \sin \beta_1 \cos \beta_1 (f_{12} + f_{21}) = 0.
    \end{align}
    System of equations \eqref{eq: 3rd set} can be written in the matrix form $Ax = O$ with 
    \begin{equation*}
        A = \begin{bmatrix}
        -\cos \alpha_1 & \sin \alpha_1\\
        -\cos \alpha_2 & \sin \alpha_2
        \end{bmatrix},
        \quad x = \begin{bmatrix}
            \sin \beta_1 \cos \beta_1 (f_{11} - f_{22}) + \sin^2 \beta_1 f_{12} - \cos^2 \beta_1 f_{21}\\
            \sin \beta_1 f_{13} - \cos \beta_1 f_{23}
        \end{bmatrix} \mbox{   and   }
        O = \begin{bmatrix}
            0\\
            0
        \end{bmatrix}.
    \end{equation*}
    Now, if $\det A = \sin(\alpha_1 - \alpha_2) = 0$, then $\alpha_1 = \alpha_2 \pm n \pi$ which is a contradiction. Hence $\det A \neq 0$ which further gives $x = O$. Applying similar technique to systems of equations \eqref{eq: 1st set} and \eqref{eq: 2nd set} separately, we get $By = O$ and $Bz = O$, where 
    \begin{equation*}
        B = \begin{bmatrix}
            \sin \beta_1 & -\cos \beta_1\\
            \cos \beta_1 & \sin \beta_1
        \end{bmatrix},
        \quad y = \begin{bmatrix}
            f_{13}\\
            f_{23}
        \end{bmatrix} \quad \mbox{       and       }
        \quad z = \begin{bmatrix}
            f_{31}\\
            f_{32}
        \end{bmatrix},
    \end{equation*}
    along with the following equations:
    \begin{align} \label{eq: 6}
    \begin{split}
        \sin \beta_1 \cos \beta_1 (f_{11} - f_{22}) + \sin^2 \beta_1 f_{12} - \cos^2 \beta_1 f_{21} = 0,\\
        \sin \beta_1 \cos \beta_1 (f_{11} - f_{22}) - \cos^2 \beta_1 f_{12} + \sin^2 \beta_1 f_{21} = 0,
    \end{split}
    \end{align}
    \begin{align} \label{eq: 10}
        \cos^2 \beta_1 f_{11} + \sin^2 \beta_1 f_{22} - f_{33} + \sin \beta_1 \cos \beta_1 (f_{12} + f_{21}) = 0,
    \end{align}
    \noindent Since $\det B = 1 \neq 0$, we get
    \begin{align}
        f_{13} = f_{31} = f_{23} = f_{32} = 0.
    \end{align}
    Subtracting the two equations in \eqref{eq: 6} gives $f_{12} = f_{21}$. Using this and adding equations \eqref{eq: 4} and \eqref{eq: 10} further gives
    \begin{align}
        f_{11} + f_{22} - f_{33} = 0.
    \end{align}
    The above equation along with the trace-free condition implies $f_{11} + f_{22} = 0$ and $f_{33} = 0$. Substituting $f_{22} = -f_{11}$ and $f_{12} = f_{21}$ to equations \eqref{eq: 4} and \eqref{eq: 6} and using the matrix determinant technique as above, we finally get $f_{11} = f_{12} = 0$, which further gives $f_{21} = f_{22} = 0$. This completes the proof.    
    \end{proof}
\section{Microlocal Inversion}\label{sec: microlocal_inversion}
In this section, we prove our main theorem by constructing a relative left parametrix for the normal operator $\mathcal{N}$ of the mixed ray transform. 
\begin{proof}[Proof of theorem \ref{thm: main theorem}]
    The proof of Proposition \ref{prop: ellipticity proposition} shows that the set of tensors 
    \begin{equation} \label{eq: LI tensors}
        \left\{\left\{\omega_q \otimes {(\omega_q)}_\alpha; q = 1, 2, 3\right\}, \left\{\omega_q \otimes {(\omega_q)}_\beta; q = 1, 2\right\}\right\}
    \end{equation}
    is linearly independent. Let $V$ be a matrix having tensors 
    $$\left\{\left\{\omega_q \otimes {(\omega_q)}_\alpha; q = 1, 2, 3\right\}, \left\{\omega_q \otimes {(\omega_q)}_\beta; q = 1, 2, 3\right\}\right\}$$
    as column vectors. Then we can write $A_0 (x, \xi) = V V^{t}$. The rank of $A_0 (x, \xi)$ will be 5 due to linear independence of tensors in set \eqref{eq: LI tensors}. Using Singular Value Decomposition for $A_0$ matrix, we can write $A_0 = U D V^{t}$, where $U$ and $V$ are orthogonal matrices and $D$ has 5 non-zero entries. Let $D^-$ be a diagonal matrix obtained by taking reciprocals of non-zero entries of $D$ and let $B_0 (x, \xi) = \sigma(f^{s}) V D^- U^t$ with $\sigma(f^{s})$ being the principal symbol of the solenoidal part $f^{s}$ of $f$. Then, we have
    \begin{equation*}
        B_0 (x, \xi) A_0 (x, \xi) = \sigma(f^{s}) V \begin{pmatrix}
            I_5 & 0\\
            0 & 0
        \end{pmatrix} V^t = \sigma(f^{s})
    \end{equation*}
    Further, define $b_0$ as follows:
    \begin{equation*}
        b_0 (x, \xi) = \begin{cases*}
            B_0 (x, \xi); \quad (x, \xi) \in \Xi_0\\
            0; \hspace{1.5cm} \text{ otherwise }
        \end{cases*}
    \end{equation*}
    Let $\mathcal{B}_0$ be the operator with symbol matrix $b_0$. The possible singularities of $D^-$ are only on $\Sigma$, and hence the entries of $B_0(x, \xi)$ lie in $I(\Delta, \Lambda)$. Now $A_0$ is the symbol of a pseudodifferential operator of order $-1$ away from $\Sigma$. Since $B_0$ is obtained by inversion of $A_0$, $B_0$ is a symbol of order $1$. Hence, we get $\mathcal{B}_0 \in I^{0, 1} (\Delta, \Lambda)$. Also, we have that the operator $\mathcal{N} \in I^{-1, 0} (\Delta, \Lambda)$. Hence, using composition calculus stated in Theorem \ref{Uhlmann_composition_calculus}, we finally get $\mathcal{B}_0 \mathcal{N} \in I^{-\frac{1}{2}, \frac{1}{2}}$.

    \noindent Define $\mathcal{M}_1 = \mathcal{B}_0 \mathcal{N} - f^{s}$. The symbol calculus for $I^{p, l} (\Delta, \Lambda)$ classes is given as follows \cite{Guillemin_Uhlmann_Oscillatory_integrals}:
    \begin{equation*}
        0 \rightarrow I^{p, l - 1} (\Delta, \Lambda) + I^{p - 1, l} (\Delta, \Lambda) \rightarrow I^{p, l} (\Delta, \Lambda) \xrightarrow{\sigma_0} S^{p, l} (\Delta, \Sigma) \rightarrow 0,
    \end{equation*}
    where $S^{p, l} (\Delta, \Sigma)$ denotes the space of product type symbols; for more details, see \cite{Microlocal_2018}. Using this sequence, we can decompose $\mathcal{M}_1$ as $\mathcal{M}_1 = \mathcal{M}_{1 1} + \mathcal{M}_{1 2}$ with $\mathcal{M}_{1 1} \in I^{-\frac{3}{2}, \frac{1}{2}}$ and $\mathcal{M}_{1 2} \in I^{-\frac{1}{2}, -\frac{1}{2}}$. Let $m_{1 1}$ and $m_{1 2}$ be the matrices such that $\sigma_0 (\mathcal{M}_{1 j}) = m_{1 j} A_0;$$ j = 1, 2$. Further, let $\mathcal{B}_{1 1}$ and $\mathcal{B}_{1 2}$ be the operators having symbols $- m_{1 1}$ and $- m_{1 2}$ respectively. For $\mathcal{B}_1 = \mathcal{B}_{1 1} + \mathcal{B}_{1 2}$, let $\mathcal{M}_2 = (\mathcal{B}_0 + \mathcal{B}_1) \mathcal{N} - f^{s}$. Then, we have
    \begin{align*}
        \mathcal{M}_2 &= (\mathcal{B}_0 + \mathcal{B}_1) \mathcal{N} - f^{s}\\
        &= \mathcal{B}_{1 1} \mathcal{N} + \mathcal{B}_{1 2} \mathcal{N} + \mathcal{B}_0 \mathcal{N} - f^{s}\\
        &= \underbrace{\mathcal{B}_{1 1} \mathcal{N} + \mathcal{M}_{1 1}}_{K_1} + \underbrace{\mathcal{B}_{1 2} \mathcal{N} + \mathcal{M}_{1 2}}_{K_2}. 
    \end{align*}
    By construction, we have $K_1 \in I^{-\frac{3}{2}, \frac{1}{2}}$ and $K_2 \in I^{-\frac{1}{2}, -\frac{1}{2}}$ with $\sigma_0 (K_1) = 0 = \sigma_0 (K_2)$. Hence, using symbol calculus, we get:

    \begin{align*}
        K_1 &= K_{1 1} + K_{1 2}, \quad K_{1 1} \in I^{-\frac{5}{2}, \frac{1}{2}}, K_{1 2} \in I^{-\frac{3}{2}, -\frac{1}{2}}\\
        K_2 &= K_{2 1} + K_{2 2}, \quad K_{2 1} \in I^{-\frac{3}{2}, -\frac{1}{2}}, K_{2 2} \in I^{-\frac{1}{2}, -\frac{3}{2}}.
    \end{align*}
    This gives
    \begin{equation*}
        \mathcal{M}_2 = \underbrace{K_{1 1}}_{\mathcal{M}_{2 0}} + \underbrace{K_{1 2} + K_{2 1}}_{\mathcal{M}_{2 1}} + \underbrace{K_{2 2}}_{\mathcal{M}_{2 2}}; \quad \mathcal{M}_{2 0} \in I^{-\frac{5}{2}, \hspace{2mm} \frac{1}{2}}, \mathcal{M}_{2 1} \in I^{-\frac{3}{2}, -\frac{1}{2}}, \hspace{2mm} \mathcal{M}_{2 2} \in I^{-\frac{1}{2}, -\frac{3}{2}}.
    \end{equation*}
    Therefore, we have
    \begin{equation*}
        \mathcal{M}_2 \in \sum_{j = 0}^2 I^{-\frac{1}{2} - 2 + j, \frac{1}{2} - j}.
    \end{equation*}
    Proceeding recursively, we get a sequence of operators
    \begin{equation*}
        \mathcal{M}_N = \sum_{j = 0}^{\left[\frac{N}{2}\right]} I^{-\frac{1}{2} - N + j, \frac{1}{2} - j} + \sum_{\left[\frac{N}{2}\right] + 1}^N I^{-\frac{1}{2} - N + j, \frac{1}{2} - j}.
    \end{equation*}
    Using the inequalities $-\frac{1}{2} - N + j \leq -\frac{1}{2} - N + \left[\frac{N}{2}\right]$, $\frac{1}{2} - j \leq \frac{1}{2}$ in the first summation and $-\frac{1}{2} - N + j \leq -\frac{1}{2}$, $\frac{1}{2} - j \leq -\frac{1}{2} - \left[\frac{N}{2}\right]$ for the second summation, along with the property $I^{p, l} \subset I^{p', l'}$, for $p \leq p', l \leq l'$, we get
    \begin{equation*}
        \sum_{j = 0}^{\left[\frac{N}{2}\right]} I^{-\frac{1}{2} - N + j, \frac{1}{2} - j} \in I^{\frac{1}{2} - N + \left[\frac{N}{2}\right], \frac{1}{2}} \quad \text{ and } \sum_{\left[\frac{N}{2}\right] + 1}^N I^{-\frac{1}{2} - N + j, \frac{1}{2} - j} \in I^{-\frac{1}{2}, -\frac{1}{2} - \left[\frac{N}{2}\right]}.
    \end{equation*}
    Taking limit $N \rightarrow \infty$ and using the properties $\cap_p I^{p, l} \subset C^\infty$ and $\cap_l I^{p, l} (\Delta, \Lambda) \subset I^p(\Lambda)$, we get that the first term in the above expression is a smoothing term and the second expression is an operator $\mathcal{A} \in I^{-\frac{1}{2}} (\Lambda)$. Finally, defining $\mathcal{B} = \mathcal{B}_0 + \mathcal{B}_1 + \mathcal{B}_2 + \dots$, we get
    \begin{equation*}
        \mathcal{B} \mathcal{N} f (x) = f^{s} (x) + \mathcal{A} f (x) + C^\infty \text{ term}.
    \end{equation*}
    This completes the proof of Theorem \ref{thm: main theorem}.
\end{proof}
\section*{Acknowledgements}\label{sec:acknowledge}
I am thankful to Rohit Kumar Mishra and Suman Kumar Sahoo for their valuable suggestions on this work. Also, I acknowledge the financial support due to the PMRF fellowship from the government of India.
\bibliographystyle{plain}
\bibliography{reference}
\end{document}